\title{\vspace{-0.5cm}On the Prym variety of genus $3$ covers of genus $1$ curves}
\author{\vspace{0cm}Christophe Ritzenthaler and Matthieu Romagny}
\institution{Universit\'e de Rennes 1, CNRS, IRMAR - UMR 6625, F-35000, France}\\
\email{christophe.ritzenthaler@univ-rennes1.fr}}\\
\institution{Universit\'e de Rennes 1, CNRS, IRMAR - UMR 6625, F-35000, France}\\
\email{matthieu.romagny@univ-rennes1.fr}}
\date{\vspace{-5ex}} 
\journal{\'Epijournal de G\'eom\'etrie Alg\'ebrique} 
\newtheorem{theorem}{Theorem}[section]
\newtheorem{proposition}[theorem]{Proposition}
\newtheorem{lemma}[theorem]{Lemma}
\newtheorem{remark}[theorem]{Remark}
\numberwithin{equation}{section}
\DeclareMathOperator{\Disc}{Disc}
\DeclareMathOperator{\Jac}{Jac}
\DeclareMathOperator{\Pic}{Pic}
\DeclareMathOperator{\Prym}{Prym}
\DeclareMathOperator{\Spec}{Spec}
\def\P{\mathbf{P}}
\def\Z{\mathbf{Z}}
\def\ie{\textit{i.e.} }
\renewcommand{\geq}{\geqslant}
\renewcommand{\leq}{\leqslant}
\newcommand{\AAb}{\mathbb{A}}
\newcommand{\cc}{\mathcal{C}}
\newcommand{\dD}{\mathcal{D}}
\newcommand{\pp}{\mathcal{P}}
\newcommand{\sS}{\mathcal{S}}
\newcommand{\xx}{\mathcal{X}}
\newcommand{\tD}{\tilde{D}}
\newcommand{\tC}{\tilde{C}}
\newcommand{\tpi}{\tilde{\pi}}
\newcommand{\dpi}{\pi^{\textrm{\tiny def}}}
\newcommand{\tio}{\tilde{\iota}}
\begin{document}


\maketitle



\begin{prelims}


\def\abstractname{Abstract}
\abstract{Given a generic degree-$2$ cover of a genus 1 curve $D$ by a non-hyperelliptic
genus~$3$ curve $C$ over a field~$k$ of characteristic different from~$2$, we produce an explicit genus~$2$ curve~$X$ such that $\Jac(C)$ is isogenous to
$\Jac(D) \times \Jac(X)$. This construction can be seen as a degenerate case of
Bruin's result \cite{bruin}.}

\keywords{genus $3$ curves; plane quartics; Prym variety; singular covers}

\MSCclass{14Q05}

\vspace{0.05cm}

\languagesection{Fran\c{c}ais}{%

\textbf{Titre. Sur la vari\'et\'e de Prym des rev\^etements de genre $3$ des courbes de genre $1$} \commentskip \textbf{R\'esum\'e.} \'Etant donn\'e un rev\^etement de degr\'e $2$ g\'en\'erique d'une courbe $D$ de genre $1$ par une courbe non-hyperelliptique $C$ de genre $3$ sur un corps $k$ de caract\'eristique diff\'erente de $2$, nous produisons une courbe explicite $X$ de genre $2$ dont la jacobienne $\Jac(C)$ est isog\`ene \`a $\Jac(D)\times\Jac(X)$. Cette construction peut \^etre vue comme un cas d\'eg\'en\'er\'e du r\'esultat de Bruin \cite{bruin}.}

\end{prelims}


\newpage

\setcounter{tocdepth}{1}
\tableofcontents

\section{Introduction}
Let $C$ be a (smooth, projective, absolutely irreducible) curve of genus $g_C \geq 2$ over  a field $k$. Let us assume that $C$ admits a degree-$n$ map $\pi$ to a curve $D/k$ of genus $g_D$ with $0<g_D < g_C$. We then know that $\Jac(C)$ is isogenous to $\Jac(D) \times A$ and one would like to get as much information as possible on $A$ from $\pi$. For instance, if $C$ is given by explicit equations, is it possible to find a curve $X/k$ given by explicit equations such that $A$ is isogenous over $k$ to $\Jac(X)$?

There is an extensive literature dedicated to the case $g_C=2$ for which $D$ and $X$ are genus~$1$ curves, both for its applications to cryptography (see \cite{FS11}, \cite{cosset} and \cite{langen2}) and  for its interests in pure mathematics \cite{kaniarith}. The general theory is well understood (see for instance \cite{kani}) and explicit formulas have been worked out when $n=2$ (going back to the work of Jacobi on abelian integrals, see the references in \cite[p.395]{baker} or \cite{leprevost}), $n=3$ (see \cite{goursat}, \cite{kuhn} or the appendix of \cite{BHLS15}), $n=4$ (see \cite{bolzan4} and \cite{bruinn4}), $n=5$ (see \cite{MSV09}) and more generally when $n \leq 11$ (see \cite{kumar}).

From now on, let $n=2$. When $C$ is given by a hyperelliptic equation, we can describe the map $\pi$ as the quotient by the involution  $(x,y) \mapsto (-x,y)$. Then $C : y^2=f(x^2)$ where $f$ is a polynomial of degree $g_C+1$ and hence $D : y^2=f(x)$ is a curve of genus $g_D=\lceil \frac{g_C+1}{2} \rceil -1$. One can moreover take advantage of the existence of the hyperelliptic involution to construct the curve $X : y^2=x f(x)$ which is the quotient of $C$ by $(x,y) \mapsto (-x,-y)$. It is easy to see by the action on the differentials that $\Jac(C) \sim \Jac(D) \times \Jac(X)$.

Apart from the hyperelliptic cases described above, for a generic curve $C$, a curve $X$ has been made explicit only in few cases. One is when $D$ is hyperelliptic and $C$ is a degree-2 cover either \'etale or ramified at two points, see respectively \cite[p.346]{mumfordprym} and \cite{levin}. In \cite{bruin}, one can find  the only case where $D$ is neither elliptic nor hyperelliptic, namely $g_C=5$ and $g_D=3$.  This work relies on beautiful geometric constructions sketched in \cite[VI exercise F]{harris}. We will use this construction to get our result.

The present article deals with the case where $C$ is a non-hyperelliptic curve of genus $3$ with a degree-$2$ map to a genus $1$ curve $D$ over a field $k$ of characteristic different from $2$. It finds its motivation in the article \cite{shieh} where cohomological techniques are derived to compute the Weil polynomial of $C$. Although the techniques developed there remain interesting on their own, one byproduct of the present article is that the computation of the Weil polynomial of $C$ can be reduced to computations in genus 1 and 2.

Note that any degree-2 cover of a genus $1$ curve by a non-hyperelliptic genus $3$  curve $C$ can be written in the form $y^4- h(x,z) y^2 + r(x,z)=0$ with $r$ a degree $4$ polynomial, since we can assume that the involution  of $C$ which defines the cover is given by $(x:y:z) \mapsto (x:-y:z)$.  Our main result gives an equation of $X$ when $r$ splits as a product of degree 2 factors.

\begin{theorem} \label{th:main}
 Let $C$
 be a smooth, non-hyperelliptic genus $3$ curve defined by
  $$C : y^4 - h(x,z) \, y^2 + f(x,z) \,  g(x,z)=0$$ 
in $\P^2$ where $$f=f_2 x^2+ f_1 x z + f_0 z^2, \quad g=g_2 x^2+g_1 x z + g_0 z^2, \quad h = h_2 x^2 + h_1 xz +h_0 z^2$$ are homogeneous degree-2 polynomials over a field $k$ of characteristic different from $2$.  The involution $(x:y:z) \mapsto (x:-y:z)$ induces a degree-2 cover $\pi$ to the genus $1$ curve  $$D : y^2 - h(x,z) \, y + f(x,z) \, g(x,z)=0$$
 in the weighted projective space $\P{(1,2,1)}$.
Let $$A = \begin{bmatrix} f_2 & f_1 & f_0 \\ h_2 & h_1 & h_0 \\ g_2 & g_1 & g_0 \end{bmatrix}$$
and assume that $A$ is invertible. Let $${A^{-1}} = \begin{bmatrix} a_1 & b_1 & c_1 \\ a_2 & b_2 & c_2 \\ a_3 & b_3 & c_3 \end{bmatrix}.$$ Then  $\Jac(C) \sim \Jac(D) \times \Jac(X)$ with $X \simeq y^2 = b \cdot (b^2-ac)$ in $\P{(1,3,1)}$ where 
$$a=a_1+2 a_2 x+ a_3 x^2, \quad b=b_1 + 2 b_2 x + b_3 x^2, \quad c=c_1+2 c_2 x+ c_3 x^2.$$
\end{theorem}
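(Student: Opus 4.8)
The plan is to first split off the elliptic factor using the given involution, and then to identify the two–dimensional complementary factor with $\Jac(X)$ by means of an explicit correspondence, checking the isogeny on holomorphic differentials.

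\emph{Step 1 (Prym reduction).} Since $\sigma\colon(x:y:z)\mapsto(x:-y:z)$ is an involution of $C$ with quotient $\pi\colon C\to D$, we have $\Jac(C)\sim\Jac(D)\times P$ with $P=(1-\sigma)\Jac(C)$ the Prym variety, of dimension $g_C-g_D=2$. Working in the chart $z=1$ and writing $F=y^4-hy^2+fg$, the space $H^0(C,\Omega^1)$ has basis $\tfrac{dx}{F_y},\tfrac{x\,dx}{F_y},\tfrac{y\,dx}{F_y}$ with $F_y=2y(2y^2-h)$; since $\sigma$ fixes $dx$ and negates $F_y$, the first two forms are anti-invariant and span the cotangent space of $P$, while $\tfrac{y\,dx}{F_y}$ descends to $D$. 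Thus it remains to prove $P\sim\Jac(X)$, where $X\colon Y^2=b(b^2-ac)$ carries the differentials $\tfrac{dx}{Y},\tfrac{x\,dx}{Y}$; note $\dim\Jac(X)=2$ as well, provided the sextic $b(b^2-ac)$ is separable, which I would deduce from the smoothness of $C$ together with the invertibility of $A$.

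\emph{Step 2 (the curve $X$ and a correspondence).} The hypothesis that $A$ is invertible says exactly that $f,h,g$ form a basis of the space of binary quadratic forms, and that $a,b,c$ are (up to the weighting $a=a_1z^2+2a_2xz+a_3x^2$, etc.) the dual forms read off from $A^{-1}$. The linchpin is the identity
\[
a\,f+b\,h+c\,g=4\,x^2z^2,
\]
which drops out of $A^{-1}A=I$ by collecting the coefficients of each monomial $x^iz^{4-i}$, together with its companions coming from $(x^2,xz,z^2)^{\top}=A^{-1}(f,h,g)^{\top}$. These relations tie the data $(a,b,c)$ defining $X$ to the data $(f,h,g)$ defining $C$ and $D$, and they invite one to re-encode $C$ through the symmetric matrix $M(x)=\bigl[\begin{smallmatrix} a&b\\ b&c\end{smallmatrix}\bigr]$, whose determinant is $ac-b^2$. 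I would then construct an explicit correspondence $Z\subset C\times X$ realising, as a degenerate instance of Bruin's construction, the classical link between the net of conics $\langle f,h,g\rangle$ (equivalently the conic $(s:t)\mapsto(f:h:g)$) and the quadric bundle $Q_x(\lambda,\mu)=a\lambda^2+2b\lambda\mu+c\mu^2$; the locus $b^2-ac=0$ where $Q_x$ degenerates, together with $b=0$, then accounts for exactly the six branch points of $X\to\P^1$.

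\emph{Step 3 (verification and the main obstacle).} With such a $Z$ in hand, $\sigma$-equivariance forces the induced homomorphism between $\Jac(X)$ and $\Jac(C)$ to have image in the anti-invariant part $P$, and since both sides are two–dimensional it suffices to check that the induced map between $H^0(X,\Omega^1)$ and $H^0(C,\Omega^1)^-$ is an isomorphism. This is a finite linear-algebra computation: express the images of $\tfrac{dx}{Y}$ and $\tfrac{x\,dx}{Y}$ in the basis $\tfrac{dx}{F_y},\tfrac{x\,dx}{F_y}$ and verify that the resulting $2\times2$ matrix is invertible, its nonvanishing determinant being forced by $\det A\neq0$ through the identities of Step 2. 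I expect the real obstacle to lie in Step 2: pinning down $Z$ explicitly — equivalently, the reparametrisation of the base $\P^1$ that matches the eight branch points of $C\to\P^1$ (over $fg=0$ and $h^2-4fg=0$) with the six of $X\to\P^1$ — and organising the differential computation so that the matrix identities in $A^{-1}$ yield the required nondegeneracy. For the bare isogeny (though not for the explicit equation) one could instead invoke the fact that the polarised abelian surface $P$ is automatically isogenous to a genus-$2$ Jacobian or to a product of elliptic curves, so that the whole content of the theorem is the identification of that Jacobian with the displayed $X$.
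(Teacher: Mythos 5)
Your proposal has a genuine gap at its core: the correspondence $Z\subset C\times X$ of Step 2 is never constructed, and you yourself flag this as ``the real obstacle.'' Everything that is actually verified in the proposal is either standard (Step 1: the splitting $\Jac(C)\sim\Jac(D)\times P$ and the identification of the anti-invariant differentials $\frac{dx}{F_y},\frac{x\,dx}{F_y}$) or a correct but purely formal consequence of $A^{-1}A=I$ (the identity $af+bh+cg=4x^2z^2$ in Step 2 does hold, as one checks monomial by monomial). But these facts do not by themselves relate $\Jac(X)$ to $P$; the entire mathematical content of the theorem is exactly the bridge you leave unbuilt, namely an explicit algebraic cycle (or morphism of abelian varieties) tying the quadric-bundle data $(a,b,c)$ to the Prym of $\pi$. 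Step 3 is then vacuous: there is no induced map on differentials to test for nondegeneracy until $Z$ exists. Two smaller deferred claims also need proof rather than expectation: that the sextic $b(b^2-ac)$ is separable (the paper proves this by an explicit discriminant computation, $\mathrm{Disc}(F)=4\,g_2\,(g_2-g_1^2/4)^2\,\mathrm{Disc}(h^2-4fg)/\det(A)^{18}$ after normalizing $f=xz$), and the closing remark that $P$ is ``automatically'' a Jacobian up to isogeny glosses over the key polarization issue: the Prym of a \emph{ramified} double cover carries a $(1,2)$-polarization, not a principal one, which is precisely the difficulty the theorem must overcome.

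For comparison, the paper circumvents any explicit correspondence by a completely different route. It replaces $\pi:C\to D$ by a birationally equivalent \emph{singular} cover $\tilde\pi:\tilde C\to\tilde D$ (arithmetic genera $5$ and $3$) which is \emph{allowable} in Beauville's sense: the involution fixes exactly the two nodes and $\tilde\pi$ is unramified elsewhere. Beauville's theory then gives that the generalized Prym $P(\tilde C/\tilde D)$ is an abelian surface, isogenous to $P(C/D)$, and --- crucially --- \emph{principally polarized}. To identify it, the paper deforms $\tilde\pi$ over $k[[\epsilon]]$ into a family whose generic fibre is a smooth unramified double cover of a non-hyperelliptic genus-$3$ curve by a genus-$5$ curve, applies Bruin's explicit theorem ($y^2=-\det(Q_1+2xQ_2+x^2Q_3)$) on the generic fibre, checks that the resulting family $\xx$ of genus-$2$ curves is smooth over the base (the discriminant computation above), and concludes by uniqueness of N\'eron models that the special fibre of the Prym scheme is $\Jac(X)$ with $X:y^2=b(b^2-ac)$. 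In other words, where you hope to build the degenerate instance of Bruin's correspondence by hand, the paper uses Bruin only on a nearby \emph{smooth} fibre and lets semistable-reduction machinery (Beauville's allowable covers plus N\'eron models) transport the identification to the singular limit. If you want to salvage your direct approach, the missing work is essentially to redo Bruin's geometric construction in the nodal limit, which is likely harder than the paper's specialization argument.
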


\begin{remark}{\rm
The geometric hypothesis (\textit{i.e.} which is still true over $\bar{k}$) that $\det(A) \ne 0$ is satisfied if $C$ has automorphism group isomorphic to $\Z/2\Z$. Indeed since $C$ is non-singular over $\bar{k}$, we can assume that $f=xz$, $g=x^2 + g_1 x z+ z^2$ ($fg$ has no double root). If $A$ is non-invertible, we have for instance that $h= \lambda f + \mu g$ but then the morphism $(x:y:z) \mapsto (z:y:x)$ is also an involution. In that case, the Jacobian is going to decompose further in a product of three genus $1$ curves which are easily determined.}
\end{remark}
\begin{remark}{\rm
The arithmetic hypothesis that $r=f g$ over $k$ is however necessary in our proof and we do not know how to remove it. Note that it can however always be obtained over a finite extension of $k$.}
\end{remark}

After we wrote this article, it was brought to our attention by Victor Enolski and Yuri Fedorov that such a case was already handled by K\"otter in 1892
(see \cite{kotter} and also \cite[Eq.(2--11)]{fedorov}) in an analytic setting. The proof we propose here is completely different as it is algebraic and works over any field (of characteristic not $2$). The shape of the equation we get  for $X$ is also much simpler 
and in particular does not involve any field extension of $k$.

The general strategy is somehow new in this context although it will probably appear very classical to algebraic geometers. The idea is to move from the smooth cover $\pi : C \to D$ to a singular one $\tilde{\pi} : \tilde{C} \to \tilde{D}$ of arithmetic genus $5$ and $3$, with $\tilde{C}$ birational to $C$. The reason to do so is the following. As $n=2$, one can look at $A$ as the Prym variety $P(C/D)$, \textit{i.e.} the connected component of the identity of $\ker(\pi_* : \Jac(C) \to \Jac(D))$. Now, the Prym variety of a degree-2 cover $\pi$ naturally inherits a principal polarization from $\Jac(C)$ when $\pi$ is \'etale (see for instance \cite[Cor.2]{mumfordprym}). Our original smooth cover $\pi$ \emph{is not} \'etale but the cover $\tilde{\pi}$ will only be ramified over the singular points of $\tilde{D}$. In this case (more precisely in the case of \emph{allowable covers}, see below), Beauville showed that the generalized Prym variety of $\tilde{\pi}$ is an abelian surface $P(\tC/\tD)$, isogenous to $P(C/D)$, and naturally principally polarized. The abelian variety $P(\tC/\tD)$ is therefore the Jacobian of a (possibly reducible) genus $2$ curve $X$.  It then remains to find $X$ explicitly. In order to do so, we look at the cover $\tilde{C} \to \tilde{D}$ as the degeneration of a family of unramified degree-$2$ covers $\mathcal{C} \to \mathcal{D}$ of smooth genus $3$ curves by smooth degree $5$ curves. As we said, for the smooth generic fiber, Bruin \cite{bruin} gives  explicitly the hyperelliptic curve $\mathcal{X}$ whose Jacobian is the Prym of the cover. Using a particular deformation, we show that we can specialize the equation of $\mathcal{X}$ to find $X$.

\subsection*{Acknowledgements}
We want to thank Jeroen Sijsling for his comments on an earlier version of the paper and Abhinav Kumar, Victor Enolski and Yuri Fedorov for their references on the subject.

\section{From a singular cover to the original cover}
Let the notation and hypotheses be as in Theorem \ref{th:main} and let us consider the curve in $\P^4$ defined by
$$\tilde{C} : \begin{cases}
x_1 x_2 & = f(u,v) \\
x_2^2 + x_1 x_3 & = h(u,v) \\
x_2 x_3 &= g(u,v)
\end{cases}$$
 Multiplying  the second equation by $x_2^2$ and substituting the first and last equation we get  our initial curve
$$C : x_2^4 - h(u,v) x_2^2 + f(u,v) g(u,v)=0.$$
Hence we get an isomorphism between $C$ and $\tC$ on the locus $x_2 \ne 0$. \\

Now let us rewrite $\tilde{C}$ using the matrix $A$. We have
$$\tilde{C} :  \textrm{Id} \cdot \begin{bmatrix} x_1 x_2 \\ x_2^2+x_1 x_3 \\ x_2 x_3 \end{bmatrix} = { A} \cdot \begin{bmatrix}  u^2 \\ u v \\ v^2 \end{bmatrix}.$$
As $A$ is invertible, we get:
$$\tilde{C} : { A^{-1}}  \begin{bmatrix} x_1 x_2 \\ x_2^2+x_1 x_3 \\ x_2 x_3 \end{bmatrix} = \begin{bmatrix}  u^2 \\ u v \\ v^2 \end{bmatrix}.$$
Let  $(q_1(x_1,x_2,x_3),q_2(x_1,x_2,x_3),q_3(x_1,x_2,x_3))$ be the left-hand side vector, we get that
$$\tilde{C} : \begin{cases}
q_1 & = u^2 \\
q_2 & = uv \\
q_3 &= v^2
\end{cases}$$ is a degree $2$ cover $\tpi$ of the curve $\tD : q_2^2=q_1 q_3$. In order to see that the covers $\pi$ and $\tpi$ are birationally equivalent, let us consider the automorphism $(x_2:u:v) \mapsto (-x_2:u:v)$ of $C$ which induces the cover $\pi: C \to D$. On $\tilde{C}$, this automorphism induces the automorphism $(x_1 : x_2 : x_3 : u : v) \mapsto (-x_1 : -x_2 : -x_3 :u :v)$ since $x_1 x_2=f(u,v)$ and $x_2 x_3=g(u,v)$. But $(-x_1 : -x_2 : -x_3 :u :v)= (x_1:x_2:x_3:-u:-v)$ and hence the automorphism is $\tio$ which induces the cover $\tpi : \tilde{C} \to \tD$. In particular $D$ is birationally equivalent to $\tD$ over~$k$.

To conclude, let us study the singular points of $\tilde{C}$. Since $C$ is non-singular, they can appear only when $x_2=0$. But  from the first form of the equations of $\tC$, we see that $f(u,v)=g(u,v)=0$. Since $C$ is non-singular, $f$ and $g$ have no common root, hence $u=v=0$. The possible singular points are then only  the points $p_0=(0:0:1:0:0)$  and $p_{\infty}=(1:0:0:0:0)$. The tangent cone at the point $p_0$ has equations $x_1=x_2=f(u,v)=0$ while the tangent cone at the point $p_{\infty}$ has equations $x_2=x_3=g(u,v)=0$ (see for instance \cite[III, \S~3]{redbook} or \cite[p.485]{cox2}). Since $f$ and $g$ have no double root, it follows that these points are nodal singularities. Therefore we see that~:

\medskip

\noindent (\textasteriskcentered)
The fixed points of the involution $\tio$ are exactly the singular points,
which are nodal, and $\tio$ preserves the branches at these points.

\smallskip

This condition is phrased in Beauville \cite[p.157]{beauville}.
Since the map $\tpi$ is ramified at a point $p=(a:b:c:d:e)$ if and only if $\tio(p)=p$, \ie if and only if $d=e=0$ and $ab=0, bc=0, b^2+ac=0$, we see moreover that
$\tpi$ is ramified only at the points $p_0$ and $p_{\infty}$, in other words:

\medskip

\noindent (\textasteriskcentered\textasteriskcentered)
The cover $\tpi$ is unramified away from the singular locus.

\medskip

In Donagi and Livn\'e \cite{donagi}, covers satisfying conditions (\textasteriskcentered) and (\textasteriskcentered\textasteriskcentered)
are called {\em allowable} (also \emph{admissible} by other authors) and we keep this terminology. Finally we
sum up the preceding discussion.

\begin{proposition} \label{prop:singu}
The cover $\tpi : \tC \to \tD$ is birationally equivalent to the cover
$\pi : C\to D$ and is allowable.
\end{proposition}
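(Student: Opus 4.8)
The plan is to prove the two assertions --- birational equivalence of $\tpi$ with $\pi$, and allowability of $\tpi$ --- by assembling the geometric facts produced in the computation above; the proof is then largely a matter of organizing them.

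For the birational equivalence, I would first record that eliminating $x_1 x_2 = f$ and $x_2 x_3 = g$ from the middle equation $x_2^2 + x_1 x_3 = h$ (after multiplying it by $x_2^2$) recovers the quartic defining $C$, so that the projection $(x_1:x_2:x_3:u:v) \mapsto (x_2:u:v)$ restricts to an isomorphism on the locus $x_2 \ne 0$ and hence furnishes a $k$-birational equivalence between $\tC$ and $C$. I would then match the involutions: under this identification $\tio$ corresponds to $(x_2:u:v) \mapsto (-x_2:u:v)$, which is precisely the involution defining $\pi$. Since a degree-$2$ cover is determined birationally by its involution, this identifies $\tpi$ with $\pi$ up to birational equivalence and, in particular, $\tD$ with $D$ over $k$.

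For allowability I would verify conditions (\textasteriskcentered) and (\textasteriskcentered\textasteriskcentered) in turn, the input being the preceding singularity analysis. Since the birational map is an isomorphism wherever $C$ is smooth, any singular point of $\tC$ must lie on $x_2 = 0$; the outer equations then give $f(u,v) = g(u,v) = 0$, and as $f$ and $g$ share no root this forces $u = v = 0$, leaving only $p_0$ and $p_\infty$. I would confirm these are nodes from the tangent cones $x_1 = x_2 = f(u,v) = 0$ at $p_0$ and $x_2 = x_3 = g(u,v) = 0$ at $p_\infty$, each a pair of distinct lines because $f$ and $g$ are square-free. For the fixed locus of $\tio$, the projective identity $(a:b:c:d:e) = (-a:-b:-c:d:e)$ forces $d = e = 0$, whereupon the equations collapse to $ab = bc = 0$ and $b^2 + ac = 0$, whose only solutions are again $p_0$ and $p_\infty$; thus the ramification locus coincides with the singular locus. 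Branch preservation then follows because, in the normalized affine chart at each node, $\tio$ acts on the tangent directions $(u:v)$ by $(u:v) \mapsto (-u:-v) = (u:v)$, fixing each of the two branches. This gives (\textasteriskcentered), and (\textasteriskcentered\textasteriskcentered) is immediate, since $\tpi$ ramifies exactly on the fixed locus of $\tio$, which we have just identified with the singular set.

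The only genuinely geometric content, and hence the main obstacle, is concentrated in the second condition: one must simultaneously pin down the singular locus, verify the nodal type through the tangent cones, check that $\tio$ \emph{preserves} rather than exchanges the two branches at each node, and confirm that the ramification is concentrated exactly there. Each of these rests on the square-free hypotheses on $f$, $g$ and $fg$ (no repeated factor and no common factor), which is precisely where the smoothness of $C$ enters; once these are secured, the remaining verifications are routine.
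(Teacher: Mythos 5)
Your proposal is correct and follows essentially the same route as the paper: the projection $(x_1:x_2:x_3:u:v)\mapsto(x_2:u:v)$ giving an isomorphism on $x_2\ne 0$, the matching of $\tio$ with the involution defining $\pi$, the localization of singularities to $p_0$ and $p_\infty$ via the fact that $f$ and $g$ have no common root, the nodal tangent cones, and the fixed-point computation for the ramification. If anything, you are slightly more explicit than the paper on branch preservation (the action $(u:v)\mapsto(-u:-v)$ fixing each tangent line), which the paper asserts without spelling out.
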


Let us associate to the cover $\tilde{C} \to \tD$  the connected component containing $0$  of the kernel of the norm map $\tpi_* : \Jac(\tilde{C}) \to \Jac(\tD)$ between generalized Jacobians. We denote it $P(\tC/\tD)$ and call it the \emph{generalized Prym variety}.
Since the cover $\tilde{C} \to \tD$ is allowable, the results \cite[Prop.3.5]{beauville} and \cite[Lem.1]{donagi} show that $P(\tC/\tD)$ is an abelian variety which is isogenous to the classical Prym variety $P(C/D)$ of the ramified cover $C \to D$. Moreover the kernel of the isogeny is contained in the group generated by degree $0$  divisors supported on the singular points of $\tilde{C}$ and is therefore defined over $k$. We hence get:
\begin{lemma} \label{lem:link}
The Jacobian of $C$ is isogenous over $k$ to $D \times P(\tC/\tD)$.
\end{lemma}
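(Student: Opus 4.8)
The plan is to assemble the classical isogeny decomposition of $\Jac(C)$ attached to the smooth double cover $\pi : C \to D$ with the isogeny $P(C/D) \sim P(\tC/\tD)$ already recorded above. For the first ingredient I would recall the Prym decomposition for a degree-$2$ map. Consider the homomorphism
$$\varphi : \Jac(D) \times P(C/D) \longrightarrow \Jac(C), \qquad (y,z) \longmapsto \pi^*(y) + z,$$
built from the pullback $\pi^* : \Jac(D) \to \Jac(C)$ and the inclusion of the classical Prym variety $P(C/D) \hookrightarrow \Jac(C)$. From $\pi_* \circ \pi^* = [2]$ one checks that $\pi^*$ has finite kernel and that the intersection $\pi^*\Jac(D) \cap P(C/D)$ is finite: any $w = \pi^*(y)$ lying in $P(C/D) \subseteq \ker \pi_*$ satisfies $2y = \pi_* \pi^*(y) = 0$, so $y$ is $2$-torsion. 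A dimension count then gives $\pi^*\Jac(D) + P(C/D) = \Jac(C)$, so $\varphi$ is surjective, and the same finiteness makes $\ker \varphi$ finite; hence $\varphi$ is an isogeny. Since $\pi$ is defined over $k$, so are $\pi^*$, $\pi_*$, the abelian subvariety $P(C/D)$ and therefore $\varphi$, whence $\Jac(C) \sim \Jac(D) \times P(C/D)$ over $k$.

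Next I would invoke the isogeny $P(C/D) \sim P(\tC/\tD)$ established in the paragraph preceding the statement: by Proposition \ref{prop:singu} the cover $\tpi$ is allowable, so \cite[Prop.3.5]{beauville} and \cite[Lem.1]{donagi} apply. The crucial rationality point is that the kernel of this isogeny is generated by degree-$0$ divisors supported on the two nodes $p_0 = (0:0:1:0:0)$ and $p_\infty = (1:0:0:0:0)$ of $\tC$; since these points are $k$-rational, the kernel and hence the isogeny are defined over $k$. Composing the two isogenies over $k$ yields $\Jac(C) \sim \Jac(D) \times P(\tC/\tD)$ over $k$, which is the assertion (the $D$ in the statement standing for $\Jac(D)$).

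The only genuine difficulty is the bookkeeping of fields of definition: one must ensure that every ingredient --- the pullback $\pi^*$, the embeddings of the two Pryms, and the Beauville--Donagi--Livn\'e isogeny --- descends to $k$ rather than merely to $\bar{k}$. As noted, this reduces to the $k$-rationality of the relevant divisorial data, in particular of the two nodes $p_0$ and $p_\infty$ of $\tC$.
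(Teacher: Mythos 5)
Your proof is correct and takes essentially the same route as the paper: the paper obtains the lemma by combining the classical Prym decomposition $\Jac(C) \sim \Jac(D) \times P(C/D)$ with the Beauville/Donagi--Livn\'e isogeny $P(C/D) \sim P(\tC/\tD)$ for allowable covers, whose kernel is supported on the singular points of $\tC$ and hence defined over $k$ --- exactly the two ingredients you assemble. The only difference is one of detail: you spell out the standard decomposition argument via $\varphi(y,z)=\pi^*(y)+z$ and the $2$-torsion intersection bound, which the paper treats as classical and leaves implicit.
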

As mentioned in the introduction, what we have gained by moving from $P(C/D)$ to $P(\tC/\tD)$ is that the latter is principally polarized \cite[Th.3.7]{beauville}. From Weil (see the version in ~\cite[Thm.~3.1]{GGR}), one knows that every principally polarized abelian surface is either the Jacobian of a curve, the product of two elliptic curves (with the product polarization) or the restriction of scalars of an elliptic curve over a quadratic extension of $k$. Under our hypotheses, we are now going to recover $P(\tC/\tD)$ as the Jacobian of a genus $2$ curve $X$.
 
\section{The singular cover as a special fiber} \label{sec:gt}
  

In order to find explicitly the curve $X$, we will `embed' our singular cover $\tilde{C} \to \tD$ into a flat family whose generic fibre is non-singular and use the beautiful result of \cite{bruin} which we recall here.
\begin{proposition} \label{prop:bruin}
Let $Y \to Z$ be an unramified double cover of a genus $5$ curve over a
non-hyperelliptic curve of genus $3$ over a field $k$ of characteristic different from $2$. Then such a cover can be written $Z : Q_1 Q_3=Q_2^2$ where $Q_i$ ($i=1,2,3$) are quadratic forms, and 
$$Y : \begin{cases}
Q_1 & = u^2 \\
Q_2 & = uv \\
Q_3 &= v^2
\end{cases}.$$
Moreover the Prym variety $\Prym(Y/Z)$ (as a principally polarized abelian surface over $k$) is  the Jacobian of the hyperelliptic curve $H$ which is the projective closure of the curve with equation $y^2 = -\det(Q_1+2 x Q_2 + x^2 Q_3)$,
where we identify  $Q_i$ with the symmetric $3 \times 3$ matrices $((\partial^2 Q_i / \partial x_j \partial x_k))_{1 \leq j,k \leq 3}$.  
\end{proposition}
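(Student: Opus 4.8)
The plan is to pass through the classical geometry of plane quartics and nets of conics. First I would record that a non-hyperelliptic genus $3$ curve $Z$ is canonically a smooth plane quartic in $\P^2$, and that an unramified double cover $Y \to Z$ is the same datum as a nonzero two-torsion class $\eta \in \Jac(Z)[2]$. The existence of the asserted model amounts to producing a symmetric determinantal representation $Z = \{\det\begin{bmatrix} Q_1 & Q_2 \\ Q_2 & Q_3 \end{bmatrix} = 0\}$ by ternary quadratic forms whose associated cover is $Y$: at a point of $\P^2$ the binary form $Q_1 u^2 + 2 Q_2 uv + Q_3 v^2$ degenerates exactly along $\det = 0$, so $Z$ is its discriminant, and $Y$, cut out by $Q_1 = u^2,\ Q_2 = uv,\ Q_3 = v^2$, is the curve of double roots $(u:v)$, an étale double cover of $Z$. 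That every $\eta$ arises from such a representation, and that the cokernel line bundle of the matrix encodes precisely the two-torsion twist, is the content of the theory of symmetric determinantal representations (Dixon, Beauville); this is the algebraic incarnation of the construction sketched in \cite[VI, Exercise F]{harris}.

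Next I would set up the Prym computation through the pencil of conics $R(x) = Q_1 + 2 x Q_2 + x^2 Q_3$ obtained from the same three forms, now read as $3 \times 3$ symmetric matrices. Its degeneracy locus $\det R(x) = 0$ is a degree-$6$ polynomial in $x$ cutting out six points of $\P^1$, over which the conic $R(x)$ becomes singular; the double cover of $\P^1$ branched at these six points is the genus-$2$ curve $H : y^2 = -\det R(x)$. The geometric meaning is that the substitution turning the equations of $Y$ into the pencil $R$ exhibits each point of $Y$ as lying on a member of this pencil, so that $Y$, $Z$ and $H$ all fiber compatibly over $\P^1$ and $H$ is the natural square-root-of-the-discriminant curve attached to the family.

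The core step is the isomorphism $\Prym(Y/Z) \simeq \Jac(H)$ of principally polarized abelian surfaces. Both sides are a priori principally polarized: $\Jac(H)$ tautologically, and $\Prym(Y/Z)$ by \cite{beauville} since $Y \to Z$ is étale. I would identify them by either of two routes. The conceptual one is Donagi's tetragonal construction: the Prym-canonical system $|\omega_Z \otimes \eta|$ is a $g^1_4$, presenting $Z \to \P^1$ as a degree-$4$ cover, and the tetragonal construction applied to $Y \to Z \to \P^1$ produces a companion tower whose Prym is canonically isomorphic, as a principally polarized abelian variety, to $\Prym(Y/Z)$; one then checks that this companion curve is $H$. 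The hands-on route is a period computation: the two anti-invariant differentials in $H^0(Y,\omega_Y)^-$ descend to the Prym-canonical data on $Z$ and can be written explicitly in the coordinates $(u:v)$, so as to match $H^0(H,\omega_H)$ together with the intersection form, which pins down both the polarization and the sign $-\det$.

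The main obstacle is exactly this last matching of \emph{principal} polarizations — obtaining an isomorphism of principally polarized abelian surfaces, hence $\Jac$ of the specific curve $H$ rather than a mere isogeny — together with keeping everything rational over $k$. I expect the cleanest way around it to be a generic-then-specialize argument: the whole construction lives over the moduli space of such étale covers and produces two families of principally polarized abelian surfaces with a morphism between them, so it suffices to verify the isomorphism on a dense open locus, or at a single well-chosen point where $\Jac(H)$ and $\Prym(Y/Z)$ both visibly degenerate to a product of elliptic curves, and then to spread out by rigidity of polarized abelian schemes. Rationality over $k$ is automatic, since $Q_1, Q_2, Q_3$, the pencil $R(x)$, and hence $H$ are all defined over $k$.
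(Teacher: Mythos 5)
First, a point of comparison you could not have known: the paper does not prove this proposition at all. It is Bruin's theorem, recalled verbatim from \cite{bruin}, and the authors' only addition is the remark that Bruin's characteristic-zero arguments remain valid in characteristic different from $2$. So your attempt must really be measured against Bruin's proof, which implements the construction sketched in \cite[VI, Exercise F]{harris}. Your first two paragraphs do agree with that construction in substance, with one caveat: to obtain the forms $Q_i$ over $k$ (not merely over $\bar{k}$, which is what the classical theory of determinantal representations you invoke provides), the correct argument is to decompose $H^0(Y,\omega_Y)$ into $\pm 1$-eigenspaces for the involution --- a $k$-rational decomposition since $\mathrm{char}\,k\neq 2$ --- take $u,v$ a $k$-basis of the $2$-dimensional anti-invariant part, and note that $u^2,uv,v^2$ are invariant, hence equal to quadrics $Q_1,Q_2,Q_3$ in the canonical coordinates of $Z$. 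Rationality of the model is thus built in from the start; it is not an afterthought.

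The genuine gap is in the core step: you never construct any morphism between $\Jac(H)$ and $\Prym(Y/Z)$, over $k$ or even over $\bar{k}$, and each of your proposed routes stops exactly where the proof would have to begin. (i) Donagi's tetragonal construction applied to $Y\to Z\to \P^1$ outputs companion towers of the same numerical type --- \'etale double covers of genus $3$ curves with a tetragonal base map --- not a genus $2$ curve; it moves within a fiber of the Prym map but cannot by itself identify the Prym as the Jacobian of an explicit curve, so ``one then checks that this companion curve is $H$'' is not a meaningful step (extracting a Jacobian from that machinery requires Donagi's analysis of degenerate towers, a far deeper input, available over $\CC$, and still leaves the identification with $H$ to be done). (ii) The ``period computation'' is not carried out, only makes sense over $\CC$, and is precisely the assertion to be proved: that the anti-invariant differentials match those of $H$ ``together with the intersection form'' \emph{is} the theorem. (iii) The specialize-and-spread-out argument needs a globally defined morphism of polarized abelian schemes over the moduli space before rigidity or constancy of degree has anything to act on; producing that morphism is again the missing content. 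What is actually needed, and what \cite{bruin} supplies following \cite{harris}, is the explicit correspondence between $H$ and $Y$: the conic $R(x)=Q_1+2xQ_2+x^2Q_3$ restricts on $Y$ to the square $(u+xv)^2$, so the quadric $R(x)-(u+xv)^2\subset \P^4$ contains the canonical model of $Y$ and has rank at most $4$; the two points of $H$ above $x$ label its two rulings of planes, each ruling cuts a degree-$4$ divisor class on $Y$, and the resulting map from $H$ to $\mathrm{Pic}^4(Y)$ induces a homomorphism $\Jac(H)\to\Jac(Y)$ landing (after translation) in the Prym part. The proof then consists in showing this homomorphism is an isomorphism of principally polarized surfaces.

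Finally, your closing rationality claim is wrong as stated: knowing that $Q_1,Q_2,Q_3$ and $H$ have coefficients in $k$ does not make a $\bar{k}$-isomorphism $\Prym(Y/Z)\simeq\Jac(H)$ descend to $k$; a priori the Prym could be a nontrivial quadratic twist of $\Jac(H)$, and the sign in $-\det$ is exactly this kind of twist datum. Rationality comes from the correspondence above being defined over $k$, not from the field of definition of $H$. Note also that any route through $\CC$ (periods, or a one-point verification on a characteristic-zero moduli space) would not yield the positive-characteristic case, which the paper needs and gets precisely because Bruin's argument is algebraic and characteristic-free away from $2$.
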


\begin{remark}{\rm
Bruin's result is stated in characteristic $0$ only. But all his arguments work as well in positive characteristic different from $2$.}
\end{remark}

The \emph{discriminant} $\Disc(F)$ of a degree $4$ homogeneous polynomial  $F(x_1,x_2,x_3)$ over a field $K$ of characteristic different from $2$ is defined as the multivariate resultant of the partial derivatives of $F$ (see \cite[p.426]{gelfand}). It is a polynomial of degree $27$ in the coefficients of a generic quartic and $\Disc(F)=0$ if and only if the curve defined by $F$ is singular \cite[Chap.13.1.D]{gelfand}.

If we restrict the discriminant to quartics $F$ of the form $Q_2^2-Q_1 Q_3$ where $$Q_i=\sum_{i_1+i_2+i_3=2} a_{i_1i_2i_3}^{(i)} x_1^{i_1} x_2^{i_2} x_3^{i_3}$$ are degree $2$ homogeneous polynomials over  $K$, we get a homogeneous polynomial~$H$ of  degree $2 \cdot 27=54$ in the coefficients of the $Q_i$. Note that $H$ is nonzero since for instance  the quartic $$x_1^4-x_2^4+x_3^4=(x_1^2)^2-(x_2^2+x_3^2) \cdot (x_2^2-x_3^2)$$ has a discriminant equal to $-2^{40}$. The equation $H$ defines a hypersurface in $$V=\Spec K[a_{i_1i_2i_3}^{(i)}] \simeq \AAb^{18}.$$ 
The point $x_{\tD}$ representing the singular quartic $\tD = q_2^2-q_1 q_3$ belongs to $H$ 
and the  pencil  $$(Q_1(\lambda),Q_2(\lambda),Q_3(\lambda))=(q_1+\lambda q_1',q_2+\lambda q_2',q_3+\lambda q_3'), \quad \lambda \in \AAb^1$$ 
such that $$q_1' = (x_2^2+x_3^2)-q_1,\quad q_2'=x_1^2-q_2, \quad q_3'=(x_2^2-x_3^2)-q_3$$
is not contained in $H$ so its  generic element is smooth. 

Now, let $\sS$ be the spectrum of the discrete valuation ring $R=k[[\epsilon]]$ with special fiber $k$ and generic fiber $K$. Over $\sS$ we can define a cover $\dpi :  \cc \to \dD$ by
$$\cc :  \begin{cases}
Q_1(\epsilon) & = u^2 \\
Q_2(\epsilon) & = uv \\
Q_3(\epsilon) &= v^2
\end{cases} \to \dD : Q_2(\epsilon)^2-Q_1(\epsilon) Q_3(\epsilon)=0.$$
Clearly $\dpi$ is a deformation of $\tpi : \tC \to \tD$ and by construction its generic fiber $\pi_K : \cc_K \to \dD_K$ is smooth over $K$. The Prym varieties
of the fibres of $\dpi$ fit in a flat family which we call the \emph{Prym scheme}
$\pp= P(\cc \to \dD)$. The details of its construction are to be found in
\cite{beauville}, Section~6, especially~(6.2); we recall them briefly.
The morphism
$\dpi_* : \Pic^0(\cc) \to \Pic^0(\dD)$ is surjective with smooth target
hence it is flat, and its fibres are smooth. Therefore the group scheme
$\ker(\dpi_*)\to\sS$ is smooth and we let $\pp\to \sS$ denote the connected
component of the unit section. This is the Prym scheme; it has proper fibres
hence it is an abelian scheme. Using Proposition \ref{prop:bruin} as well
as the fact that the functor $\Pic^0$
commutes with base change, we get the following lemma.

\begin{lemma} \label{lem:pic}
The special fiber of $\pp$ is $P(\tC/\tD)$ and the generic fiber is
$P(\cc_K/\dD_K)= \Jac(X_K)$ where
$X_K : y^2=-\det(Q_1(\epsilon)+2 x Q_2(\epsilon) + x^2 Q_3(\epsilon))$.
\end{lemma}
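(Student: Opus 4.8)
The plan is to identify the generic and special fibers of the Prym scheme $\pp$ separately, relying throughout on the two facts already isolated in the text: that the functor $\Pic^0$ commutes with base change, and that $\pp$ is the identity component of the smooth group scheme $\ker(\dpi_*)$, so that both constructions are compatible with restriction to each point of $\sS$.

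First I would treat the generic fiber. The pencil $(Q_1(\epsilon),Q_2(\epsilon),Q_3(\epsilon))$ was arranged so that its generic element avoids the discriminant hypersurface $H$; hence $\dD_K$ is a smooth non-hyperelliptic genus $3$ curve, $\cc_K$ is a smooth genus $5$ curve, and $\pi_K : \cc_K \to \dD_K$ is an unramified double cover. Over the field $K$ the generalized Jacobians coincide with the usual ones, so the generic fiber of $\ker(\dpi_*)$ is $\ker(\pi_{K,*} : \Jac(\cc_K) \to \Jac(\dD_K))$, and its identity component $\pp_K$ is by definition the classical Prym $P(\cc_K/\dD_K)$. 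This cover is exactly of the shape appearing in Proposition~\ref{prop:bruin}, with the three quadratic forms $Q_i(\epsilon)$, so that proposition directly gives $P(\cc_K/\dD_K)=\Jac(X_K)$ with $X_K : y^2 = -\det(Q_1(\epsilon)+2x\,Q_2(\epsilon)+x^2 Q_3(\epsilon))$.

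Next I would treat the special fiber. Setting $\epsilon=0$ recovers $Q_i(0)=q_i$, so the special fiber of $\dpi$ is precisely the allowable singular cover $\tpi : \tC \to \tD$ of Section~2. Since $\Pic^0$ commutes with base change, the special fiber of $\ker(\dpi_*)$ is $\ker(\tpi_* : \Pic^0(\tC) \to \Pic^0(\tD))$, and it remains only to identify $\pp_k$ with the identity component of this kernel, namely the generalized Prym $P(\tC/\tD)$ defined in Section~2. This is where the one genuine subtlety lies: $\pp\to\sS$ is the identity component of the unit section of the \emph{smooth} group scheme $\ker(\dpi_*)$, and as an abelian scheme it has geometrically connected fibers; consequently $\pp_k$ is a connected, open and closed subgroup scheme of $\ker(\tpi_*)$ containing the identity, and must therefore coincide with its identity component $P(\tC/\tD)$.

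Combining the two computations yields the statement. The substance of the argument is thus Bruin's Proposition~\ref{prop:bruin} applied to the smooth generic fiber, while the main technical point to verify carefully is the compatibility of the identity-component construction with specialization; this is exactly what the smoothness of $\ker(\dpi_*)$ and the connectedness of the fibers of the abelian scheme $\pp$ provide, and I would expect it to be the only step requiring more than a routine check.
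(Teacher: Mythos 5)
Your proposal is correct and follows essentially the same route as the paper, whose proof is the one-line observation that the lemma follows from Proposition~\ref{prop:bruin} together with the compatibility of $\Pic^0$ with base change; your two fiber identifications are exactly the content of that remark, with the generic fiber handled by Bruin just as in the paper. The ``subtlety'' you isolate about identity components commuting with specialization is, in the paper's setup (the fiberwise identity component of the smooth group scheme $\ker(\dpi_*)$, following Beauville, Section~6), true essentially by definition, so your open-and-closed argument is a harmless elaboration rather than a divergence.
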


Let $\xx/\sS$ be the projective closure of  $\Spec(R[x,y]/(y^2+\det(Q_1(\epsilon)+2 x Q_2(\epsilon) + x^2 Q_3(\epsilon))$ in the weighted projective space $\P^{(1,3,1)}$, in other words the projective hypersurface described by the homogenized
equation. The special fiber $X= \xx \otimes k$ is the completion of 
$$y^2= - \det(Q_1(0) + 2 x Q_2(0) +x^2 Q_3(0))= -\det(q_1 +2 x q_2 + x^2 q_3).$$
We are going to prove the following lemma.
\begin{lemma}
The scheme $\xx$ is smooth over $\sS$ with geometrically connected fibres.
\end{lemma}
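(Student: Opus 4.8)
The plan is to reduce the relative smoothness of $\xx\to\sS$ to the smoothness of its two fibres, and then to reduce the smoothness of the special fibre to the hypothesis that $C$ is smooth. First I would record that $\xx$ is a hypersurface in the weighted projective space $\P^{(1,3,1)}_R$, cut out by the single homogenized equation $y^2+\det\bigl(Q_1(\epsilon)+2xQ_2(\epsilon)+x^2Q_3(\epsilon)\bigr)$; since $R$ is a discrete valuation ring and the special fibre is one-dimensional, the morphism $\xx\to\sS$ is flat by constancy of the fibre dimension. For a flat, finite-type morphism, smoothness is equivalent to geometric regularity of all fibres, and as $\sS=\Spec k[[\epsilon]]$ has only a closed and a generic point, it suffices to prove that the geometric generic and geometric special fibres are smooth curves. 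Geometric connectedness will follow once we know each fibre has the form $y^2=F$ with $F$ a nonzero, non-square binary sextic, since then $y^2-F$ is irreducible over $\bar k(x)$ and the fibre is integral.

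For the generic fibre this is immediate from the construction: the pencil $(Q_1(\lambda),Q_2(\lambda),Q_3(\lambda))$ was chosen so as not to lie in the discriminant hypersurface $H$, so the quartic $\dD_K$ is smooth and $\cc_K\to\dD_K$ is an unramified double cover of a smooth genus-$3$ curve by a genus-$5$ curve. Proposition~\ref{prop:bruin} then applies verbatim over $K$ and shows that $X_K:y^2=-\det\bigl(Q_1(\epsilon)+2xQ_2(\epsilon)+x^2Q_3(\epsilon)\bigr)$ is a smooth genus-$2$ curve; equivalently, the relative discriminant $\Delta(\epsilon):=\Disc_x\bigl(\det(Q_1(\epsilon)+2xQ_2(\epsilon)+x^2Q_3(\epsilon))\bigr)$ is nonzero in $K$, hence nonzero in $R$.

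The heart of the argument is the special fibre $X:y^2=-\det(q_1+2xq_2+x^2q_3)$ over $k$, which amounts to showing $\Delta(0)\neq0$. Here I would first compute the determinant explicitly: writing the symmetric matrix associated with $q_1+2xq_2+x^2q_3$ as
$$N(x)=\begin{bmatrix} 0 & a/2 & b/2 \\ a/2 & b & c/2 \\ b/2 & c/2 & 0 \end{bmatrix},$$
with $a,b,c$ the degree-$\le 2$ polynomials of Theorem~\ref{th:main} (this is exactly the content of the identity $(a,b,c)=(1,2x,x^2)A^{-1}$), a direct expansion gives $\det N(x)=-\tfrac14\,b(b^2-ac)$ up to the harmless constant coming from the Hessian convention. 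Thus, after rescaling $y$, the special fibre is $X:y^2=b(b^2-ac)$, and $X$ is a smooth genus-$2$ curve in $\P^{(1,3,1)}$ precisely when the binary sextic obtained by homogenizing $b(b^2-ac)$ has six distinct roots in $\P^1$.

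The main obstacle is therefore to prove that $b(b^2-ac)$ is separable, and to do so I would translate a hypothetical repeated root back into a singularity of $C$. Using the relations $A\,(x^2,x,1)^{\mathsf T}=(f,h,g)^{\mathsf T}$ together with $(a,b,c)=(1,2x,x^2)A^{-1}$, one expresses $b$ and $b^2-ac$ in terms of $f,g,h$ and their Wronskian-type combinations; a repeated root of $b(b^2-ac)$ then forces either a repeated factor of $fg$ or a coincidence in $h^2-4fg$, each of which produces, via the Jacobian criterion applied to $y^4-hy^2+fg$, a singular point of $C$, contradicting the hypothesis. Concretely, I expect $\Disc\bigl(b(b^2-ac)\bigr)$ to equal, up to a nonzero constant and a power of $\det A$, a product of $\Disc(fg)$ and $\Disc(h^2-4fg)$ (or the relevant resultants), all of which are nonzero because $C$ is smooth and $\det A\neq0$. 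The delicate point, and the reason the singular quartic $\tD\in H$ does \emph{not} obstruct smoothness of $X$, is that the discriminant of the plane quartic is an entirely different polynomial from $\Delta=\Disc_x(\det N)$: the vanishing of the former at $x_{\tD}$ says nothing about the latter, and it is the explicit factorization above that makes the separability, hence $\Delta(0)\neq0$, transparent. Establishing this factorization (or the equivalent case analysis) is the only substantial computation, after which flatness together with $\Delta\in R^\times$ yields smoothness of $\xx\to\sS$ with geometrically connected fibres.
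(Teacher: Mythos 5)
Your proposal follows the same route as the paper's proof: reduce to smoothness of the two fibres, get the generic fibre from the choice of pencil together with Proposition~\ref{prop:bruin}, identify the special fibre as $y^2=b(b^2-ac)$ up to a nonzero constant (your Gram-matrix expansion $\det N(x)=-\tfrac14\,b(b^2-ac)$ is correct; the paper asserts $F=b(b^2-ac)$ with its Hessian convention, and the constant is indeed harmless for this lemma), and make separability of this binary sextic the crux. Your connectedness argument (irreducibility of $y^2=F$ for $F$ a separable non-square sextic) differs harmlessly from the paper's (constancy of the number of connected components in a smooth proper family); both work.

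The genuine gap is the crux itself: you only \emph{announce} the factorization of $\Disc\bigl(b(b^2-ac)\bigr)$ and defer its verification, and that verification is precisely the substantial content of the paper's proof. Since separability is a geometric statement, the paper passes to $\bar{k}$, normalizes $f=xz$, $g=g_2x^2+g_1xz+z^2$ (possible because smoothness of $C$ forces $fg$ to be separable), and then establishes by a computer algebra computation that
$$\Disc(F)=\frac{4\,g_2\,(g_2-g_1^2/4)^2\,\Disc(h^2-4fg)}{\det(A)^{18}},$$
after which $C$ smooth gives $g_2\neq0$ and $\Disc(g)\neq0$, and $D$ smooth gives $\Disc(h^2-4fg)\neq0$. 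Note, moreover, that the identity you predict is not the one that holds: in this normalization $\Disc(fg)$ equals $g_2^2\,\Disc(g)$ up to a nonzero constant, whereas the actual cofactor above is $g_2\,\Disc(g)^2$; the two agree only as sets of zeros, not as polynomials, so the formula cannot be obtained from general multiplicativity of discriminants and resultants \textemdash\ it genuinely has to be computed, or replaced by a complete case analysis showing that a repeated root of $b(b^2-ac)$ produces a singular point of $C$ or of $D$ (which you sketch via ``Wronskian-type combinations'' but do not carry out). The set-theoretic agreement does mean your contradiction argument would survive once such a computation is done; until then the decisive step is missing, and with it done your argument coincides with the paper's.
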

\begin{proof}
As we are considering our schemes in the weighted projective space $\P{(1,3,1)}$, the point at infinity is smooth and we only have to consider the affine part in what follows. Since the generic fiber $X_K$ is smooth, we need to consider the singular points of the special fiber $X$. They correspond to points where $F(x)=\det( q_1 +2 x  q_2 + x^2 q_3)$ has a (projective) root with multiplicity greater than~1.  With the notation of Theorem \ref{th:main}, since 
$$\begin{bmatrix} q_1 \\ q_2 \\ q_3 \end{bmatrix} = \begin{bmatrix} a_1 & b_1 & c_1 \\ a_2 & b_2 & c_2 \\ a_3 & b_3 & c_3 \end{bmatrix} \begin{bmatrix} x_1 x_2 \\ x_2^2+x_1 x_3 \\ x_2 x_3 \end{bmatrix}$$ 
we have $F=b \cdot (b^2-ac)$. We are actually going to prove that $F$ has no multiple root. As this is a geometric problem, we can assume that we work over an algebraically closed field. In that case, since $C$ is assumed to be non-singular, we can take $f(x,z)=xz$ and $g(x,z)=g_2 x^2 + g_1 x z+ z^2$.  A computation with a computer algebra system such as \texttt{Magma} shows that 
$$\textrm{Disc}(F) = \frac{4 \cdot   g_2 \cdot (g_2-g_1^2/4)^2 \cdot \textrm{Disc}(h^2-4 f g)}{\det(A)^{18}}.$$
Since the curve $D$ is assumed to be non-singular, we see that $4 f g - h^2$ has no multiple root, hence $\textrm{Disc}(h^2-4 f g) \ne 0$. The curve $C$ being non-singular, one has that $g_2 \ne 0$ (otherwise $(1:0:0)$ would be a singular point) and one finds that $g$ does not have a double root, hence $g_2-g_1^2/4 \ne 0$. We hence get $\textrm{Disc}(F) \ne 0$.

Finally, since $\xx\to\sS$ is smooth and proper, the number of connected components of the fibres is constant on $\sS$, hence equal to $1$.
\hfill $\Box$
\end{proof}

With this result on $\xx$,  we have the right hypotheses to apply the following proposition (see for instance \cite[Th. 9.5/1]{BLR}).

\begin{proposition} \label{prop:artin}
$\Pic^0(\xx)$ is a N\'eron model of $\Jac(X_K)$.
\end{proposition}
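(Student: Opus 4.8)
The plan is to verify that $\xx\to\sS$ meets the hypotheses of the cited theorem \cite[Th.~9.5/1]{BLR} and then read off the conclusion. The preceding lemma already supplies the essential geometric input: $\xx\to\sS$ is smooth and proper with geometrically connected one-dimensional fibres, \ie it is a smooth family of (genus $2$) curves over the discrete valuation ring $R$. First I would record that for such a family the relative Picard functor $\Pic^0(\xx)=\Pic^0_{\xx/\sS}$ is representable by a smooth, separated $\sS$-group scheme, and that its fibres are the Jacobians of the proper smooth fibres of $\xx$, hence abelian varieties of dimension $2$. In particular $\Pic^0(\xx)\to\sS$ is proper, so it is an abelian scheme. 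Since the formation of $\Pic^0$ commutes with base change — a fact already invoked in Lemma~\ref{lem:pic} — its generic fibre is $\Pic^0_{X_K/K}=\Jac(X_K)$.

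The substantive point is then the general statement that an abelian scheme over the discrete valuation ring $R$ is the N\'eron model of its generic fibre, which I would establish through the N\'eron mapping property. Concretely, given a smooth $\sS$-scheme $Z$ and a $K$-morphism $Z_K\to\Pic^0_{X_K/K}$, one must extend it uniquely to an $\sS$-morphism $Z\to\Pic^0(\xx)$. Uniqueness is immediate from separatedness of $\Pic^0(\xx)$ together with the schematic density of $Z_K$ in the $R$-flat scheme $Z$. For existence, I would use that $Z$ is regular (being smooth over the regular base $\sS$) and that $\Pic^0(\xx)\to\sS$ is proper and smooth: the rational map is defined at every codimension-$1$ point by the valuative criterion of properness, and the Weil extension theorem for rational maps from a smooth scheme to a group scheme then extends it across the remaining locus, which has codimension $\ge 2$.

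I expect no real obstacle beyond bookkeeping: the only choice to be made is which precise representability statement for the Picard functor of a smooth proper relative curve one cites. The genuinely delicate geometric hypothesis — that the relative curve be smooth and proper with connected fibres — is exactly what the preceding lemma has verified, so no further computation is needed and the proposition follows directly by applying \cite[Th.~9.5/1]{BLR} to $\xx\to\sS$.
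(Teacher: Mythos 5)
Your proposal is correct and matches the paper's approach: the paper proves this proposition simply by invoking \cite[Th.~9.5/1]{BLR}, whose hypotheses (smoothness, properness, geometrically connected fibres of $\xx\to\sS$) were secured in the preceding lemma, exactly as you do. The extra material in your write-up — representability of $\Pic^0$, and the Weil-extension-theorem argument that an abelian scheme over a discrete valuation ring is the N\'eron model of its generic fibre — is just an unpacking of the standard proof of that cited theorem, not a different route.
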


We can now conclude. By Lemma \ref{lem:pic}, the N\'eron model of the Jacobian of $X_K$ is the N\'eron model of the generic fiber $P(\cc_K/\dD_K)$ of $\pp$. Since the special fiber of $\pp$ is $P(\tC/\tD)$ which is an abelian variety (the cover~$\tpi$ being allowable), we see that the N\'eron model of $P(\cc_K/\dD_K)$ is $\pp$. Hence the N\'eron model of $\Jac(X_K)$ is $\pp$ and by Proposition \ref{prop:artin} we have $\pp = \Pic^0(\xx) $. Taking the special fiber gives:
$$\pp \otimes k = P(\tC/\tD)=  \Pic^0(\xx) \otimes k = \Jac(\xx \otimes k) = \Jac(X).$$
We have therefore obtained a description of $P(\tC/\tD)$ which is isogenous to $P(C/D)$ and this concludes the proof of Theorem \ref{th:main}.

\providecommand{\bysame}{\leavevmode\hbox to3em{\hrulefill}\thinspace}
%
%

\bibliographystyle{amsalpha}
\bibliographymark{References}

\end{document}